\def\C{\mathbb C}
\def\N{\mathbb N}
\def\T{\mathbb T}
\def\Z{\mathbb Z}
\def\H{\mathcal H}
\def\K{\mathcal K}
\def\h{\mathfrak h}
\def\aut{\operatorname{Aut}}
\def\id{\operatorname{id}}
\def\pol{\mathcal{O}}
\def\lra{\longrightarrow}
\def\ra{\rightarrow}
\def\tens{\mathop{\otimes}} 
\def\ot{\mathop{\otimes}}
\newtheorem{theo}{Theorem}
\newtheorem{coro}[theo]{Corollary}
\newtheorem{lemm}[theo]{Lemma}
\newcounter{rlist}
 \newenvironment{rlist}{\begin{list}{\rm{(\roman{rlist})}}{
 \usecounter{rlist}\leftmargin2.5em\labelwidth2em\labelsep0.5em
 \topsep0.6ex
 \parsep0.3ex plus0.2ex minus0.1ex}}{\end{list}}
\begin{document}

\title{On the quantum flag manifold $SU_q(3)/\T^2$}

 \author{Tomasz Brzezi\'nski}
 \address{Department of Mathematics, Swansea University\\
Swansea University Bay Campus\\
Fabian Way,
Swansea
SA1 8EN, U.K.  \\
\and\\ Department of Mathematics\\ University of Bia\l ystok\\ K.\ Cio\l kowskiego 1M\\ 15--245 Bia\l ystok\\ Poland} 
  \email{T.Brzezinski@swansea.ac.uk}   
\author{Wojciech Szyma\'nski}
\address{Department of Mathematics and Computer Science\\ University of Southern Denmark\\
Campusvej 55\\ 5230 Odense M, Denmark} 
\email{szymanski@imada.sdu.dk} 

\thanks{The research of the first named author was partially supported by the Polish National Science Centre grant 2016/21/B/ST1/02438. The second named author was supported by  the 
DFF-Research Project 2, `Automorphisms and invariants of operator algebras', Nr. 7014--00145B, 2017--2021.}

\subjclass{46L65, 81R60}
\keywords{Quantum flag manifold, quantum fibre bundle, quantum SU(3) group}

\begin{abstract}
The structure of the $C^*$-algebra of functions on the quantum flag manifold $SU_q(3)/\T^2$ is investigated. Building on the representation 
theory of $C(SU_q(3))$, we analyze irreducible representations and the primitive ideal space of $C(SU_q(3)/\T^2)$, with a view towards unearthing 
the ``quantum sphere bundle'' $\C P_q^1 \to SU_q(3)/\T^2 \to \C P_q^2$. 
\end{abstract}

\maketitle

\section{Introduction}

The theory of principal and associated fibre bundles lies at the heart of geometry and underpins important applications to physics. Due to combined 
effort of many researchers, see e.g.\ \cite{BrzMaj:gau,h,BrzHaj:Che}, this theory has been successfully incorporated 
into noncommutative geometry.   In the noncommutative setting, spaces are replaced by (noncommutative) algebras of functions, typically 
$C^*$-algebras or their dense $*$-subalgebras, and quantum groups (or Hopf algebras) play the role of structure groups. By contrast, 
precious little is known about noncommutative analogs of more general fibre bundles, in which the fibre does not correspond to a group. 

This short note is intended as a first step towards a case study of  noncommutative sphere bundles. More specifically, the classical flag manifold 
$SU(3)/\T^2$ has a natural structure of the sphere bundle 
$$ \C P^1 \to SU(3)/\T^2 \to \C P^2. $$
We intend to analyze the structure of the quantum analog of this flag manifold, corresponding to the $C^*$-algebra $C(SU_q(3)/\T^2)$ 
playing the role of the total space. 
Here $SU_q(3)$ denotes the Woronowicz quantum $SU(3)$ group, and $C(SU_q(3)/\T^2)$ itself is the $C^*$-algebra of fixed points for 
the action of $\T^2$ on $C(SU_q(3))$ coming from its maximal torus. 

The quantum flag manifold $SU_q(3)/\T^2$ is just one of the large family of (generalised) quantum flag manifolds, whose structure has been studied and described in full generality in \cite{StoDij:fla} and \cite{NevTus:hom}. However, in order to be able to understand $SU_q(3)/\T^2$ as the total space of a quantum sphere bundle from the analytic point of view, it is necessary to have a detailed and explicit information about the internal structure of the $C^*$-algebra $C(SU_q(3)/\T^2)$ readily accessible. 
This is the main aim of the present note.
In particular, we carefully describe the 
primitive ideal space of this $AF$-algebra, building on the explicit description of irreducible representations of $C(SU_q(3))$, calculated 
originally by K.\ Br\k{a}giel in his PhD dissertation \cite{bragiel}. 

In the final section of this note we show how to construct a faithful conditional expectation from $C(SU_q(3)/\T^2)$ onto its subalgebra 
$C(\C P_q^2)$, using integration over the quantum group $U_q(2)$ (realised as a quantum subgroup of $SU_q(3)$). More detailed, algebraic description of the 
noncommutative sphere bundle 
$$ \C P_q^1 \to SU(3)/\T^2 \to \C P_q^2 $$
and its $K$-theory is deferred to the forthcoming paper \cite{BrzSz}.

\section{The quantum flag manifold}

\subsection{The algebra of functions on the quantum $\mathbf{SU(3)}$ group}

For $q\in(0,1)$, the $C^*$-algebra $C(SU_q(3))$ of `continuous
functions' on the quantum $SU(3)$ group is defined by Woronowicz
\cite{w3,w2} as the universal $C^*$-algebra generated by elements
$\{u_{ij}:i,j=1,2,3\}$ such that the matrix $\mathbf{u} = (u_{ij})_{i,j=1}^3$ is unitary and
$$
\sum_{i_1=1}^3\sum_{i_2=1}^3\sum_{i_3=1}^3 E_{i_1i_2i_3}
u_{j_1 i_1}u_{j_2 i_2}u_{j_3 i_3}=E_{j_1 j_2 j_3},
\;\;\;\;\; \forall(j_1,j_2,j_3)\in\{1,2,3\},
$$
where
$$
E_{i_1 i_2 i_3}=\begin{cases} (-q)^{I(i_1,i_2,i_3)} & \text{if}
\; i_r\neq i_s \; \text{for} \; r\neq s, \\ 0 & \text{otherwise,}
\end{cases}
$$
and $I(i_1,i_2,i_3)$ denotes the number of inversed pairs in the
sequence $i_1,i_2,i_3$. As pointed out by Br\k{a}giel \cite{bragiel},
$\{u_{ij}\}$ are coordinate functions of a quantum matrix 
\cite{d,so,frt}. That is, the following relations are also satisfied
\begin{subequations}\label{qmatrix}
\begin{align}
u_{ij}u_{ik} &=  qu_{ik}u_{ij}, \;\;\;\;\; j<k, \label{qmatrix1} \\
u_{ji}u_{ki} &=  qu_{ki}u_{ji}, \;\;\;\;\; j<k, \label{qmatrix2} \\
u_{ij}u_{km} &=  u_{km}u_{ij}, \;\;\;\;\; i<k, \; j>m, \label{qmatrix3} \\
u_{ij}u_{km}-u_{km}u_{ij} &=  (q-q^{-1})u_{im}u_{kj},
\;\;\;\;\; i<k, \; j<m, \label{qmatrix4}
\end{align}
\end{subequations}
with $i,j,k,m\in\{1,2,3\}$. The comultiplication 
$$
\Delta:C(SU_q(3))
\lra C(SU_q(3))\otimes C(SU_q(3))
$$
 is a unital $C^*$-algebra homomorphism
such that
$$ \Delta(u_{ij})=\sum_{k=1}^n u_{ik}\otimes u_{kj}. $$

We denote by $\pol(SU_q(3))$ the $*$-subalgebra of $C(SU_q(3))$ generated
by the $u_{ij}$, $i,j=1,2,3$. Thus $\pol(SU_q(3))$,
the polynomial algebra of $SU_q(3)$, is a dense $*$-subalgebra of $C(SU_q(3))$.

In \cite{bragiel}, Br\k{a}giel described explicitly all irreducible
representations of the algebra $C(SU_q(3))$. There are six families of
these representations, each indexed by elements $(\phi,\psi)$
of the $2$-torus. We denote them by $\pi_0^{\phi,\psi}$, $\pi_{11}^{\phi,\psi}$, 
$\pi_{12}^{\phi,\psi}$, $\pi_{21}^{\phi,\psi}$, $\pi_{22}^{\phi,\psi}$ and
$\pi_3^{\phi,\psi}$. Each of the representations $\pi_*^{\phi,\psi}$ acts on the
Hilbert space $\H_*$, where 
$$\H_0=\C, \;  \H_{11}=\H_{12}=\ell^2(\N), \; 
\H_{21}=\H_{22}=\ell^2(\N^2) \; \mbox{and} \; \H_3=\ell^2(\N^3).
$$
 Each of the
$\pi_*^{\phi,\psi}$ contains  compact operators of $\H_*$
in its image \cite{bragiel}, and thus $C(SU_q(3))$ is a type $I$ algebra.
The kernels of these irreducible representations are primitive
ideals of $C(SU_q(3))$ with the following generators:
\begin{subequations}
\begin{align}
\ker(\pi_3^{\phi,\psi}) & =  \langle \overline{\phi}u_{31}-|u_{31}|,\;
    \overline{\psi}u_{13}-|u_{13}| \rangle, \label{ker1} \\
\ker(\pi_{21}^{\phi,\psi}) & =  \langle u_{31},\;\overline{\phi}u_{21}-|u_{21}|,\;
    \overline{\psi}u_{13}-|u_{13}| \rangle, \label{ker2} \\
\ker(\pi_{22}^{\phi,\psi}) & =  \langle u_{13},\; \overline{\phi}u_{31}-|u_{31}|,\;
    \overline{\psi}u_{12}-|u_{12}| \rangle, \label{ker3} \\
\ker(\pi_{11}^{\phi,\psi}) & =  \langle u_{13},\; u_{31},\; u_{23},\; \overline{\phi}u_{12}
     -|u_{12}|,\; \overline{\psi}u_{21}-|u_{21}| \rangle, \label{ker4} \\
\ker(\pi_{12}^{\phi,\psi}) & =  \langle u_{13},\; u_{31},\; u_{12},\;
     \phi\psi u_{32}-|u_{32}|,\; \overline{\psi}u_{23}-|u_{23}| \rangle, \label{ker5} \\
\ker(\pi_0^{\phi,\psi}) & =  \langle u_{13},\; u_{31},\; u_{12},\; u_{23},\;
     \overline{\phi}u_{11}-1,\; \overline{\psi}u_{22}-1 \rangle. \label{ker6}
\end{align}
\end{subequations}

\subsection{The gauge action and its fixed point algebra}

The family of $1$-dimensional irreducible representations $\pi_0^{\phi,\psi}$
of $C(SU_q(3))$ produces a surjective morphism of compact quantum groups
$$
\hat{\pi}_0:C(SU_q(3))\lra C(\T^2)
$$ 
(the diagonal imbedding of $\T^2$ into $SU_q(3)$),
which gives rise to a gauge coaction of coordinate algebras 
$$
\hat{\mu}:\pol(SU_q(3))\ra \pol(SU_q(3))\otimes
\pol(\T^2), \qquad \hat{\mu}=(\id\otimes\hat{\pi}_0)\circ \Delta_{SU_q(3)}.
$$
Explicitly, on the polynomial algebra $\pol(SU_q(3))$, $\hat{\pi}_0$ is a Hopf $*$-algebra epimorphism,
$$
\hat{\pi}_0:\pol(SU_q(3))\lra \pol(\T^2), \qquad \mathbf{u}\mapsto \begin{pmatrix}U_1 & 0 & 0\cr 0 & U_2 & 0\cr 0 & 0 & U_1^*U_2^*\end{pmatrix},
$$
where $U_1, U_2$ are unitary, group-like generators of the Hopf algebra $\pol(\T^2)$ of polynomials on $\T^2$ (the algebra of Laurent polynomials in two indeterminates). Hence the coaction comes out as
$$
\hat{\mu}:\pol(SU_q(3))\ra \pol(SU_q(3))\otimes
\pol(\T^2), \quad u_{ij}\mapsto \begin{cases} u_{ij}\tens U_j & \text{if} \; j=1,2, \\
u_{ij} \tens (U_1U_2)^{-1}  & \text{if} \; j=3. \end{cases}
$$

Equivalently,
$\mu:\T^2\lra\aut(C(SU_q(3)))$  is given by
$$
z \longmapsto \mu_z, \qquad
  \mu_z(u_{ij}) =\begin{cases} z_j u_{ij} & \text{if} \; j=1,2, \\
(z_1z_2)^{-1}u_{ij} & \text{if} \; j=3. \end{cases}
$$
Here $z=(z_1,z_2)\in\T^2$ and each $z_i$ is a complex number of modulus $1$.
Let $C(SU_q(3)/\T^2)$ be the fixed point algebra of this gauge action, and let
$\pol(SU_q(3)/\T^2)=\pol(SU_q(3))\cap C(SU_q(3)/\T^2)$ be its polynomial
$*$-subalgebra, i.e.\ the subalgebra of coinvariants of $\hat{\mu}$,
$$
\pol(SU_q(3)/\T^2)= \pol(SU_q(3))^{\mathrm{co}\pol(\T^2)} = \{f\in  \pol(SU_q(3))\; |\; \hat{\mu}(f) = f\tens 1\}.
$$

Integration with respect to the Haar measure over $\T^2$ gives rise to
a faithful conditional expectation $\Phi:C(SU_q(3))\ra C(SU_q(3)/\T^2)$, namely
$$
\Phi(x)=\int_{z\in\T^2}\mu_z(x)dz.
$$
If $w$ is a monomial in $\{u_{ij}\}$ then $\Phi(w)$ is either $0$ or $w$.
Thus we have $\Phi(\pol(SU_q(3)))=\pol(SU_q(3)/\T^2)$, and whence
$\pol(SU_q(3)/\T^2)$ is a dense $*$-subalgebra of $C(SU_q(3)/\T^2)$.

There is a third equivalent way of understanding the gauge action, 
which is particularly  useful in determining the freeness of the action (alas we will not employ this point of view in this note): $\pol(SU_q(3))$ is a $\Z^2$-graded algebra with the degrees of the generators given by
$$
\deg(u_{i1}) = (1,0), \; \deg(u_{i2}) = (0,1), \; \deg(u_{i3}) = (-1,-1), \quad i=1,2,3.
$$
From this point of view, $\pol(SU_q(3)/\T^2)$ is the  $(0,0)$-degree part of $\pol(SU_q(3))$.

In what follows, we denote
\begin{equation}\label{gen.flag}
w_{ijk}=u_{i1}u_{j2}u_{k3}, \qquad i,j,k=1,2,3. 
\end{equation}
Clearly, elements $w_{ijk}$ are contained in the polynomial algebra $\pol(SU_q(3)/\T^2)$. 

Let $\rho_*^{\phi,\psi}$
be the restriction to $C(SU_q(3)/\T^2)$ of the representation
$\pi_*^{\phi,\psi}$ of $C(SU_q(3))$.

\begin{lemm}\label{rhoequivalent}
For each $(\phi,\psi)\in\T^2$, the representation $\rho_*^{\phi,\psi}$
is unitarily equivalent to $\rho_*^{1,1}$.
\end{lemm}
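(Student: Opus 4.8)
The plan is to realise each $\pi_*^{\phi,\psi}$ as a gauge twist of $\pi_*^{1,1}$ and then restrict to the fixed-point algebra, on which the twist becomes invisible. Two elementary observations drive everything. First, under the gauge action each generator is merely rescaled by a phase, $\mu_z(u_{ij}) = \lambda_{ij}(z)\,u_{ij}$ with $|\lambda_{ij}(z)|=1$, where $\lambda_{ij}(z)=z^{\deg(u_{ij})}$ is read off the $\Z^2$-grading recorded above. Second, the modulus $|u_{ij}| = (u_{ij}^*u_{ij})^{1/2}$ is gauge-invariant, since $\mu_z(u_{ij}^*u_{ij}) = \overline{\lambda_{ij}(z)}\lambda_{ij}(z)\,u_{ij}^*u_{ij} = u_{ij}^*u_{ij}$.

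First I would compute the kernel of the twisted representation $\pi_*^{1,1}\circ\mu_z$. As $\ker(\pi_*^{1,1}\circ\mu_z) = \mu_z^{-1}(\ker\pi_*^{1,1})$, and $\mu_z^{-1}=\mu_{z^{-1}}$ fixes every modulus $|u_{ij}|$ while multiplying $u_{ij}$ by $\overline{\lambda_{ij}(z)}$, applying $\mu_z^{-1}$ to the generators listed in \eqref{ker1}--\eqref{ker6} yields generators of exactly the same shape, but with the phases $1$ replaced by the phases $\overline{\lambda_{ij}(z)}$. In each of the six families the two phase-carrying generators involve matrix entries whose degree vectors form a $\Z$-basis of $\Z^2$ (for instance $(1,0),(-1,-1)$ for $\pi_3$, or $(0,1),(-1,-1)$ for $\pi_{12}$). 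Hence the induced assignment $z\mapsto(\phi,\psi)$ is a bijection of $\T^2$, and for any prescribed $(\phi,\psi)$ there is a unique $z\in\T^2$ with $\ker(\pi_*^{1,1}\circ\mu_z) = \ker\pi_*^{\phi,\psi}$.

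Since $C(SU_q(3))$ is type $I$ and each $\pi_*^{\phi,\psi}$ is irreducible, two such representations with equal kernels are unitarily equivalent; thus there is a unitary $V$ with $\pi_*^{\phi,\psi}(a) = V\,\pi_*^{1,1}(\mu_z(a))\,V^*$ for all $a\in C(SU_q(3))$. Restricting $a$ to $C(SU_q(3)/\T^2)$, where $\mu_z(a)=a$, gives $\rho_*^{\phi,\psi}(a) = V\,\rho_*^{1,1}(a)\,V^*$, which is the claimed equivalence. For the one-dimensional family $\pi_0$ the unitary is a scalar and one even obtains $\rho_0^{\phi,\psi}=\rho_0^{1,1}$.

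The main obstacle is the bookkeeping in the middle step: one must verify, family by family, that a single torus element $z$ simultaneously accounts for both phase parameters $\phi$ and $\psi$. This is precisely the unimodularity of the relevant $2\times 2$ matrix of degree vectors, which indeed holds in all six cases; were these vectors not a basis, some $(\phi,\psi)$ would be unreachable from $(1,1)$ by the gauge action. The only other point requiring care is the passage from equality of kernels to unitary equivalence, which is legitimate here because $C(SU_q(3))$ is type $I$ and every $\pi_*^{\phi,\psi}$ contains the compact operators on $\H_*$ in its image.
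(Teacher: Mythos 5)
Your proof is correct and follows essentially the same route as the paper: establish that the gauge action is transitive on each family of primitive ideals, invoke the type~$I$ property to upgrade equality of kernels to unitary equivalence $\pi_*^{\phi,\psi}\cong\pi_*^{1,1}\circ\mu_z$, and then observe that the twist $\mu_z$ disappears upon restriction to the fixed-point algebra. The only difference is that you spell out the transitivity step (via the $\Z^2$-grading and unimodularity of the relevant degree vectors), which the paper simply asserts follows immediately from the kernel formulae \eqref{ker1}--\eqref{ker6}.
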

\begin{proof}
It follows immediately from formulae (\ref{ker1})--(\ref{ker6}) that
the gauge action $\mu$ on the primitive ideal space is transitive on
each of the six families. Since $C(SU_q(3))$ is of type $I$, irreducible
representations with identical kernels are unitarily equivalent.
Thus, for each $(\phi,\psi)$ there exist $(z_1,z_2)$ such that
$\pi_*^{\phi,\psi}$ is unitarily equivalent to $\pi_*^{1,1}\circ
\mu_{z_1,z_2}$. But $\rho_*^{1,1}\circ\mu_{z_1,z_2}=
\rho_*^{1,1}$, and whence $\rho_*^{\phi,\psi}$ is unitarily
equivalent to $\rho_*^{1,1}$.
\end{proof}

In what follows we use the simplified notation $\rho_*=\rho_*^{1,1}$.

\begin{lemm}\label{rhoirr}
The image of $\rho_*$ contains all the compact operators $\K(\H_*)$
on its space $\H_*$, and thus each $\rho_*$ is irreducible.
\end{lemm}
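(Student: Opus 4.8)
The plan is to prove the two assertions at once by showing directly that $\rho_*(\pol(SU_q(3)/\T^2))$ contains every matrix unit with respect to the standard orthonormal basis of $\H_*$; its norm closure then contains $\K(\H_*)$, and irreducibility of $\rho_*$ is immediate since $\K(\H_*)$ acts irreducibly. (Alternatively, once a single nonzero compact is produced and irreducibility is established, the full inclusion $\K(\H_*)\subseteq\rho_*(C(SU_q(3)/\T^2))$ follows from the standard fact that an irreducible representation whose image meets $\K(\H_*)$ nontrivially contains all of it.) The first step is to identify convenient generators of the fixed-point algebra. Since $\deg u_{ij}$ depends only on the column $j$, the degree-$(0,0)$ elements include the same-column quadratics $u_{ij}^*u_{kj}$ and the cubic monomials $w_{ijk}=u_{i1}u_{j2}u_{k3}$ of \eqref{gen.flag}; these generate $\pol(SU_q(3)/\T^2)$ and are the operators I would work with.

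Next I would read off, from Br\k{a}giel's explicit formulas \cite{bragiel}, the action of the generators $u_{ij}$ on the basis $\{e_{\mathbf n}\}_{\mathbf n\in\N^{d}}$ of $\H_*$ (with $d=0,1,2,3$ according to $*$), and thereby the action of the invariants just listed. The argument then rests on two ingredients. First, the self-adjoint invariants $u_{ij}^*u_{ij}=|u_{ij}|^2$ act diagonally with discrete spectra accumulating only at $0$; taking several of them together so as to separate the basis vectors, continuous functional calculus inside the $C^*$-algebra $\rho_*(C(SU_q(3)/\T^2))$ yields every diagonal rank-one projection $|e_{\mathbf n}\rangle\langle e_{\mathbf n}|$, in particular the projection onto the lowest-weight vector $e_{\mathbf 0}$ (an isolated spectral value). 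Second, the off-diagonal invariants $u_{ij}^*u_{kj}$ with $i\neq k$, together with the cubics $w_{ijk}$, act as weighted shifts that move between basis vectors; this is the familiar $SU_q(2)$/Podle\'s mechanism, where $u_{i1}^*u_{k1}$ is a weighted shift because one factor is a shift and the other diagonal. Granting that these shifts connect all basis vectors, composing a suitable invariant shift with the diagonal projections isolates each matrix unit $|e_{\mathbf m}\rangle\langle e_{\mathbf n}|$, and the lemma follows.

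The main obstacle is the connectivity in the second ingredient. Because every element of the fixed-point algebra has total $\Z^2$-degree zero, each available shift necessarily moves several of the number operators $N_1,\dots,N_d$ simultaneously in a correlated way, so a priori the orbit of $e_{\mathbf 0}$ could be a proper sublattice of $\N^d$. I would therefore have to verify, from the explicit matrix coefficients, that the invariants $\{u_{ij}^*u_{kj}\}$ and $\{w_{ijk}\}$ (and their adjoints) supply enough independent correlated shifts to reach every basis vector, and likewise that the diagonal invariants genuinely separate points. The one-dimensional case $\H_0=\C$ is trivial; the cases $\H_{11},\H_{12}=\ell^2(\N)$ reduce to the $SU_q(2)$ computation, which I would settle first as a model; the delicate bookkeeping is concentrated in the higher-rank families $\H_{21},\H_{22}=\ell^2(\N^2)$ and $\H_3=\ell^2(\N^3)$, where the nonvanishing of the relevant weights and the interplay of the several shift directions must be checked against \eqref{ker1}--\eqref{ker6} to guarantee transitivity.
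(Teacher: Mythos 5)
Your overall strategy is the same as the paper's -- produce invariant elements acting diagonally whose joint spectrum separates the standard basis (giving rank-one projections by functional calculus), plus invariant weighted shifts connecting the basis vectors (giving all matrix units) -- but the proposal stops exactly where the real work begins, and the concrete operators you name would not do the job. The blanket claim that the $u_{ij}^*u_{ij}$ act diagonally is false: in Br\k{a}giel's representation $\pi_3$ the generators are sums of tensor products of $SU_q(2)$-type shifts and diagonals, so, e.g., $|u_{11}|^2$ has off-diagonal matrix elements; the only generators acting diagonally on $\H_3$ are $u_{13}$ and $u_{31}$, and each of $|u_{13}|^2$, $|u_{31}|^2$ has eigenvalue a power of $q^2$ given by a single linear function of the three quantum numbers, so this pair takes equal values on infinitely many pairs of distinct basis vectors of $\ell^2(\N^3)$ and provably cannot separate them. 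The separating diagonal elements must be sought among the cubics $w_{ijk}$ -- which your proposal lists only as ``shifts'' -- and this is precisely what the paper does: $w_{132}$ for $\rho_{12}$, the pair $w_{312},w_{132}$ for $\rho_{22}$, and $|w_{311}|^2$ for $\rho_3$.

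More seriously, the two statements you explicitly defer (that the diagonal invariants separate points and that the shifts act transitively on the basis) are the entire content of the lemma, and for $\rho_3$ the separation claim fails even for the correct diagonal invariant: $\rho_3(|w_{311}|^2)$ has eigenvalues $q^{2(3N+M+L+3)}(1-q^{2M})$, whose maximum is attained on the span of the vectors $|0,M,0\rangle$ maximizing $q^{2M}(1-q^{2M})$; since $x(1-x)$ can take equal values at $x=q^{2M_0}$ and $x=q^{2(M_0+1)}$ (exactly when $q^{2M_0}=1/(1+q^2)$), this eigenspace may be two-dimensional, and no functional calculus on that operator alone can produce a rank-one projection. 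The paper needs an additional idea at this point which your outline does not contain: take the (possibly two-dimensional) spectral projection $Q$ and observe that the compression $Q\rho_3(w_{311})Q$ is rank one, because $w_{311}$ lowers the middle quantum number by one; only then do the shifts $w_{111}, w_{211}, w_{311}$ generate all of $\K(\H_3)$. Without either exhibiting an explicit separating family of diagonal invariants (none is exhibited) or supplying such a degeneracy-breaking device, your argument does not close.
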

\begin{proof}
Representation $\rho_0$ is $1$-dimensional and there is nothing to prove in this case.

Considering $\rho_{12}$, given by formulae (14) of \cite{bragiel}, we have
$$
\rho_{12}(w_{132})|N\rangle=-q^{2N+1}|N\rangle.
$$ 
Thus
the image of $\rho_{12}$ contains one-dimensional projections
corresponding to the basis $\{|N\rangle:N\in\N\}$ of $\H_{12}$.
Since 
$$
\rho_{12}(w_{133})|N\rangle=\text{scalar}|N+1\rangle
$$
(in the course of the proof of this lemma we denote by `$\text{scalar}$'
a non-zero constant which may depend on $N,M,L$), it follows that
the image of $\rho_{12}$ contains all the compact operators on
$\H_{12}$. In the case of $\rho_{11}$ the same argument works, since
$\rho_{11}(w_{213})=\rho_{12}(w_{132})$ and
$\rho_{11}(w_{223})|N\rangle=\text{scalar}|N+1\rangle$.

By formulae (12) of \cite{bragiel}, 
$$
\rho_{22}(w_{312})
|N,M\rangle=q^{2(N+M+1)}|N,M\rangle
$$ 
and 
$$
\rho_{22}(w_{132})
|N,M\rangle=-q^{2M+1}(1-q^{2(N+1)})|N,M\rangle.
$$
 It follows that
the image of $\rho_{22}$ contains all one-dimensional projections
corresponding to the basis $\{|N,M\rangle:N,M\in\N\}$ of $\H_{22}$.
We also find that
 $$
 \rho_{22}(w_{112})|N,M\rangle=\text{scalar}
|N-1,M\rangle
$$ 
and 
$$
\rho_{22}(w_{212})|N,M\rangle=\text{scalar}
|N,M-1\rangle, 
$$
and it follows that the image of $\rho_{22}$ contains all
the compact operators on $\H_{22}$. The argument for $\rho_{21}$ is
similar and based on the identities: 
$$
\rho_{21}(w_{231}) =
\rho_{22}(w_{312}),  \qquad \rho_{21}(w_{132})=
\rho_{22}(w_{132}).
$$
$$
 \rho_{21}(w_{131})|N,M
\rangle=\text{scalar}|N-1,M\rangle
$$ 
and
 $$
\rho_{21}(w_{211})
|N,M\rangle=\text{scalar}|N,M-1\rangle.
$$

Finally, considering $\rho_3$, given by formulae (10) of \cite{bragiel}, we have
\begin{subequations}
\begin{align}
\rho_3(|w_{311}|^2)|N,M,L\rangle & =  q^{2(3N+M+L+3)}
(1-q^{2M})|N,M,L\rangle, \label{rho3} \\
\rho_3(w_{111})|N,M,L\rangle & =  \text{scalar}
|N-1,M-1,L\rangle, \label{rho4} \\
\rho_3(w_{211})|N,M,L\rangle & =  \text{scalar}
|N,M-1,L-1\rangle, \label{rho5} \\
\rho_3(w_{311})|N,M,L\rangle & =  \text{scalar}
|N,M-1,L\rangle. \label{rho6}
\end{align}
\end{subequations}
By (\ref{rho3}), the operator $\rho_3(|w_{311}|^2)$ is compact
and its spectral subspace corresponding to the maximal eigenvalue
is spanned by vectors $|0,M,0\rangle$ for which $M$ is a
positive integer such that $q^{2M}(1-q^{2M})$ is maximal.
This space is either one or two-dimensional. In the former case,
the image of $\rho_3$ contains the one-dimensional projection
onto $|0,M_0,0\rangle$, and  formulae (\ref{rho4})--(\ref{rho6})
imply that it contains all the compact operators on $\H_3$.
In the latter case, the image of $\rho_3$ contains the two-dimensional
projection $Q$ onto the span of $|0,M_0,0\rangle$ and $|0,M_0+1,0\rangle$.
Then $Q\rho_3(w_{311})Q$ is a rank one operator, and just as above it
follows from formulae (\ref{rho4})--(\ref{rho6}) that the image of
$\rho_3$ contains all the compact operators on $\H_3$.
\end{proof}

We define $J_*=\rho_*^{-1}(\K(\H_*))$, closed ideals of $C(SU_q(3)/\T^2)$.

\begin{lemm}\label{allrho}
The following properties hold:
\begin{rlist}
\item Representation $\rho_3$ of $C(SU_q(3)/\T^2)$ is faithful.
\item $J_3=\ker(\rho_{21})\cap\ker(\rho_{22})$.
\item $J_{21}=J_{22}=\ker(\rho_{21})+\ker(\rho_{22})
   =\ker(\rho_{11})\cap\ker(\rho_{12})$.
\item $J_{11}=J_{12}=\ker(\rho_{11})+\ker(\rho_{12})
   =\ker(\rho_0)$.
\end{rlist}
\end{lemm}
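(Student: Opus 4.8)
The strategy is to realise the four assertions as the manifestation, on the fixed--point algebra $A:=C(SU_q(3)/\T^2)$, of a composition series
$$
0\subset J_3\subset J_{21}\subset J_{11}\subset A,
$$
whose successive subquotients are $\K(\H_3)$, $\K(\H_{21})\oplus\K(\H_{22})$, $\K(\H_{11})\oplus\K(\H_{12})$ and $\C$, one group for each length stratum of the Weyl group $S_3$. Part (i) is the ground floor of this scheme. Since $\rho_3=\rho_3^{1,1}$ and, by Lemma \ref{rhoequivalent}, $\rho_3^{\phi,\psi}\cong\rho_3$ for all $(\phi,\psi)\in\T^2$, any $a\in A$ with $\rho_3(a)=0$ satisfies $\pi_3^{\phi,\psi}(a)=0$ for every $(\phi,\psi)$. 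As the family $\{\pi_3^{\phi,\psi}\}$ constitutes the open dense stratum of $\prim C(SU_q(3))$, its kernels intersect in $\{0\}$, whence $a=0$; thus $\rho_3$ is faithful and, by Lemma \ref{rhoirr}, $J_3=\rho_3^{-1}(\K(\H_3))\cong\K(\H_3)$.

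Each ``Type--I'' identity $J_*=\bigcap\ker\rho_{**}$, with the intersection running over the representations one length shorter, splits into two inclusions governed by the asymptotics of Br\k{a}giel's formulae (cf.\ the proof of Lemma \ref{rhoirr}). For every such pair there are isometries $V_n\colon\H_{**}\to\H_*$ realising $\H_{**}$ as the $n$-th coordinate slice of $\H_*$, with $V_n\to0$ weakly and $V_n^*\rho_*(a)V_n\to\rho_{**}(a)$ strongly on the generators $w_{ijk}$, hence on all of $A$. The forward inclusion is then immediate: if $\rho_*(a)$ is compact then $V_n^*\rho_*(a)V_n\to0$ strongly, forcing $\rho_{**}(a)=0$. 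This yields $J_3\subseteq\ker\rho_{21}\cap\ker\rho_{22}$, then $J_{21},J_{22}\subseteq\ker\rho_{11}\cap\ker\rho_{12}$ and $J_{11},J_{12}\subseteq\ker\rho_0$, so in particular each shorter representation factors through the appropriate quotient.

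The reverse inclusions are the crux and the main obstacle. One must show that the next--shorter representations \emph{jointly} detect every class of the Calkin--type quotient $\rho_*(A)/\K(\H_*)$, i.e.\ that the boundary of each cell consists of exactly the two adjacent cells. Concretely, the generators act on $\H_*$ as weighted shifts whose symbols at infinity in the two degenerating coordinates are precisely the shorter representations $\rho_{**}$, so that their simultaneous vanishing kills every symbol and, by the standard Toeplitz argument, forces $\rho_*(a)\in\K(\H_*)$. The delicate point is to verify from the explicit formulae that the listed boundary representations exhaust \emph{all} directions of non--compactness. Granting this one obtains $\ker\rho_{21}\cap\ker\rho_{22}\subseteq J_3$, $\;\ker\rho_{11}\cap\ker\rho_{12}\subseteq J_{21}=J_{22}$ and $\ker\rho_0\subseteq J_{11}=J_{12}$; combined with the forward inclusions this gives every Type--I equality, and in particular the coincidences $J_{21}=J_{22}=\ker\rho_{11}\cap\ker\rho_{12}$ and $J_{11}=J_{12}=\ker\rho_0$.

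It remains to replace the two inner intersections by the sums $\ker\rho_{21}+\ker\rho_{22}$ and $\ker\rho_{11}+\ker\rho_{12}$. The inclusion $\subseteq$ is the Bruhat containment $\ker\rho_{21},\ker\rho_{22}\subseteq\ker\rho_{11}\cap\ker\rho_{12}$ (respectively $\ker\rho_{11},\ker\rho_{12}\subseteq\ker\rho_0$), read off from (\ref{ker1})--(\ref{ker6}) intersected with $A$. For $\supseteq$ one argues directly: given $a\in\ker\rho_{11}\cap\ker\rho_{12}=J_{21}$, the operator $\rho_{21}(a)$ is compact; since $\rho_{21}$ and $\rho_{22}$ label Bruhat--incomparable strata, $\ker\rho_{22}\not\subseteq\ker\rho_{21}$, so $\rho_{21}(\ker\rho_{22})$ (the image of a closed ideal under the surjection $\rho_{21}$, hence a nonzero closed ideal of the irreducibly acting algebra $\rho_{21}(A)$) contains the minimal ideal $\K(\H_{21})$. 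Choosing $y\in\ker\rho_{22}$ with $\rho_{21}(y)=\rho_{21}(a)$ gives $a=(a-y)+y\in\ker\rho_{21}+\ker\rho_{22}$. The identical argument one length higher yields $\ker\rho_{11}+\ker\rho_{12}=\ker\rho_0$, completing (iii) and (iv).
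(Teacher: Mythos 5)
Your proposal gets two pieces right. Part (i) is essentially the paper's own argument: by Lemma \ref{rhoequivalent}, $\rho_3(a)=0$ forces $\pi_3^{\phi,\psi}(a)=0$ for all $(\phi,\psi)$, and $\bigcap_{\phi,\psi}\ker(\pi_3^{\phi,\psi})=\{0\}$ (the paper records this as an explicit ideal computation rather than appealing to density of the stratum). Your closing paragraph is also a correct and rather elegant argument, not in the paper: once one knows $J_{21}=\ker(\rho_{11})\cap\ker(\rho_{12})$, the fact that $\rho_{21}(\ker(\rho_{22}))$ is a nonzero closed ideal of the irreducibly acting algebra $\rho_{21}(C(SU_q(3)/\T^2))\supseteq\K(\H_{21})$, hence contains $\K(\H_{21})$, does yield $\ker(\rho_{11})\cap\ker(\rho_{12})\subseteq\ker(\rho_{21})+\ker(\rho_{22})$.

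The problem is that the heart of the lemma is never proved. The reverse inclusions $\ker(\rho_{21})\cap\ker(\rho_{22})\subseteq J_3$, $\ker(\rho_{11})\cap\ker(\rho_{12})\subseteq J_{21}=J_{22}$ and $\ker(\rho_0)\subseteq J_{11}=J_{12}$ are precisely the nontrivial content, and at exactly this point you write that ``the delicate point is to verify from the explicit formulae that the listed boundary representations exhaust all directions of non-compactness. Granting this\dots''. Granting that is granting the lemma; nothing in your text verifies it. The forward inclusions are likewise only asserted: the isometries $V_n$ with $V_n\to0$ weakly and $V_n^*\rho_*(\cdot)V_n\to\rho_{**}(\cdot)$ strongly are never constructed from Br\k{a}giel's formulae, and pointwise strong convergence on the generators $w_{ijk}$ does not automatically pass to products and adjoints -- one needs an asymptotic intertwining property $\rho_*(a)V_n-V_n\rho_{**}(a)\to0$, itself requiring verification. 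The paper avoids this analytic route entirely and settles both inclusions at once by ideal-theoretic computation inside $C(SU_q(3))$: it identifies $\bigcap_{\phi,\psi}\ker(\pi_*^{\phi,\psi})$ with the explicit ideals $\langle 0\rangle$, $\langle u_{31}\rangle$, $\langle u_{13}\rangle$, $\langle u_{13},u_{31},u_{23}\rangle$, $\langle u_{13},u_{31},u_{12}\rangle$; describes $J_*$ via the ideal generated by an element $x_*$ whose image is a nonzero compact ($x_3=u_{31}u_{13}$, $x_{21}=u_{13}$, etc.); and then uses the $q$-commutation relations to obtain identities such as $\langle u_{31}\rangle\cap\langle u_{13}\rangle=\langle u_{31}u_{13}\rangle$ and $\langle u_{13},u_{31},u_{12}\rangle\cap\langle u_{13},u_{31},u_{23}\rangle=\langle u_{13},u_{31},u_{12}u_{23}\rangle$, together with two facts from \cite{bragiel}: $\pi_{21}^{\phi,\psi}(u_{12}u_{23})\in\K(\H_{21})$ and faithfulness of $\oplus_{\phi,\psi}\pi_0^{\phi,\psi}$ on the quotient by $\langle u_{13},u_{31},u_{23},u_{12}\rangle$. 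To repair your proof you would have to carry out your ``delicate point'' representation by representation against the explicit formulae, which amounts to redoing in harder analytic form what these algebraic computations accomplish.
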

\begin{proof}
We have
$$ \ker(\rho_*)=C(SU_q(3)/\T^2)\cap\bigcap_{\phi,\psi}\ker(\pi_*^{\phi,\psi}). $$
Using formulae (\ref{ker1})--(\ref{ker6}) we see that $\cap_{\phi,\psi}
\ker(\pi_*^{\phi,\psi})$ are ideals of $C(SU_q(3))$ with the following
sets of generators:
\begin{subequations}
\begin{align}
\bigcap_{\phi,\psi}\ker(\pi_3^{\phi,\psi}) & = 
   \langle 0 \rangle, \label{kerphi1} \\
\bigcap_{\phi,\psi}\ker(\pi_{21}^{\phi,\psi}) & =  \langle u_{31} \rangle,
   \;\;\;\;\;\;\; \bigcap_{\phi,\psi}\ker(\pi_{22}^{\phi,\psi})=
   \langle u_{13} \rangle, \label{kerphi2} \\
\bigcap_{\phi,\psi}\ker(\pi_{11}^{\phi,\psi}) & = 
   \langle u_{13},\; u_{31},\; u_{23} \rangle,\label{kerphi3}\\
   \bigcap_{\phi,\psi}\ker(\pi_{12}^{\phi,\psi}) & = 
   \langle u_{13},\; u_{31},\; u_{12} \rangle, \label{kerphi4}
\end{align}
\end{subequations}
On the other hand, $J_*=\ker(\rho_*)\cap\langle x_* \rangle$, where
$\langle x_* \rangle$ is the ideal of $C(SU_q(3))$ generated by
$x_*$ such that $\pi_*^{\phi,\psi}(x_*)$ is a non-zero element of
$\K(\H_*)$ for all $\phi,\psi$. For example, we can take $x_3=
u_{31}u_{13}$, $x_{21}=u_{13}$, $x_{22}=u_{31}$, $x_{11}=u_{12}$
and $x_{12}=u_{23}$.

Now (i) follows from formula (\ref{kerphi1}). Claim (ii) follows
from (\ref{kerphi2}) and the identity $\langle u_{31} \rangle \cap
\langle u_{13} \rangle = \langle u_{31}u_{13} \rangle$. The latter follows
from the fact that both $u_{13}$ and $u_{31}$ either commute or $q$-commute
with every generator of $C(SU_q(3))$.

The identity $J_{21}=J_{22}=\ker(\rho_{21})+\ker(\rho_{22})$ follows
from (\ref{kerphi2}). For the remaining part of claim (iii), it suffices
to show that 
$$ 
\langle u_{13},\; u_{31} \rangle = \langle u_{13},\; u_{31},\;
u_{12} \rangle \cap \langle u_{13},\; u_{31},\; u_{23} \rangle .
$$
 To this end,
we first note that modulo the ideal $ \langle u_{13},\; u_{31} \rangle $ both
$u_{12}$ and $u_{23}$ either commute or $q$-commute with every generator
of $C(SU_q(3))$. Thus 
$$
\langle u_{13},\; u_{31},\; u_{12} \rangle \cap
\langle u_{13},\; u_{31},\; u_{23} \rangle = \langle u_{13},\; u_{31},\;
u_{12}u_{23} \rangle,
$$
and it suffices to verify that $u_{12}u_{23}\in
\langle u_{13},\; u_{31} \rangle $. By formulae (11)  of
\cite{bragiel}, we have $\pi_{21}^{\phi,\psi}(u_{12}u_{23})\in\K(\H_{21})$ for
all $\phi,\psi$, and the claim follows.

The identity $J_{11}=J_{12}=\ker(\rho_{11})+\ker(\rho_{12})$ follows from
(\ref{kerphi3}) and \eqref{kerphi4}. For the remaining part of claim (iv), we must prove that
$$
\ker(\rho_0)=C(SU_q(3)/\T^2)\cap\langle u_{13},\; u_{31},\; u_{23},\;
u_{12} \rangle.
$$
However,  as shown in \cite{bragiel}, $\oplus_{\phi,\psi}
\pi_0^{\phi,\psi}$ is faithful on the quotient of $C(SU_q(3)$ by
$\langle u_{13},\; u_{31},\; u_{23},\; u_{12} \rangle$, and the claim follows.
\end{proof}

In the following corollary we summarise properties of the algebra of continuous functions on the quantum flag manifold $SU_q(3)/\T^2$.

\begin{coro}\label{af}
The $C^*$-algebra $C(SU_q(3)/\T^2)$ has the following properties.
\begin{enumerate}
\item It has a composition series with factors: $\K$, $\K\oplus\K$,
$\K\oplus\K$, $\C$.
\item It is $AF$ and of type $I$.
\item Its $K$-groups are $K_0\cong\Z^6$ and $K_1=0$.
\item $\{\rho_*\}$ is a complete set of representatives
(up to unitary equivalence) of its irreducible representations.
\item Each irreducible representation of $C(SU_q(3)/\T^2)$ extends
to an irreducible representation of $C(SU_q(3))$ acting on the
same Hilbert space. \item Its primitive ideal space consists of
six elements $\{\ker(\rho_*)\}$, with topology determined by the
following closure operation.
\begin{enumerate}
\item The point $\ker(\rho_3)$ is dense in the entire space.
\item The closures of $\ker(\rho_{21})$ and $\ker(\rho_{22})$,
respectively, consist of the union of itself and
$\{\ker(\rho_0),\ker(\rho_{11}),\ker(\rho_{12})\}$. 
\item The closures of $\ker(\rho_{11})$ and $\ker(\rho_{12})$,
respectively, consist of the union of itself and $\ker(\rho_0)$.
\item The point $\ker(\rho_0)$ is closed.
\end{enumerate}
\end{enumerate}
\end{coro}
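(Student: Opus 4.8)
The plan is to read off all six assertions from a single composition series built directly out of Lemma~\ref{allrho}. Write $A=C(SU_q(3)/\T^2)$ and set
$$I_1=J_3=\ker(\rho_{21})\cap\ker(\rho_{22}),\quad I_2=\ker(\rho_{21})+\ker(\rho_{22}),\quad I_3=\ker(\rho_{11})+\ker(\rho_{12})=\ker(\rho_0).$$
By Lemma~\ref{allrho}(ii)--(iv) these are nested closed ideals $0\subset I_1\subset I_2\subset I_3\subset A$, since $I_2=\ker(\rho_{11})\cap\ker(\rho_{12})$ forces $\ker(\rho_{21}),\ker(\rho_{22})\subseteq I_2\subseteq\ker(\rho_{11}),\ker(\rho_{12})\subseteq I_3$. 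The bottom factor $I_1=J_3$ maps, by faithfulness of $\rho_3$ (Lemma~\ref{allrho}(i)) together with Lemma~\ref{rhoirr}, isomorphically onto $\rho_3(J_3)=\K(\H_3)\cong\K$. For the middle factors I would use the purely algebraic identity
$$\frac{M+N}{M\cap N}\cong\frac{M}{M\cap N}\oplus\frac{N}{M\cap N},$$
valid for any two closed ideals $M,N$ (via the $*$-homomorphism $x\mapsto(x+N,x+M)$, which is surjective with kernel $M\cap N$). Taking $M=\ker(\rho_{21})$, $N=\ker(\rho_{22})$ and mapping each summand by the complementary representation, $\ker(\rho_{21})/I_1\cong\rho_{22}(I_2)=\rho_{22}(J_{22})=\K(\H_{22})$ and symmetrically $\ker(\rho_{22})/I_1\cong\K(\H_{21})$, so $I_2/I_1\cong\K\oplus\K$; the same computation with $\rho_{11},\rho_{12}$ gives $I_3/I_2\cong\K\oplus\K$. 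Finally $A/I_3=A/\ker(\rho_0)\cong\rho_0(A)=\C$. This is exactly the composition series of assertion~(1).

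Type~I (part of (2)) is then immediate, as every subquotient is a finite direct sum of elementary C*-algebras and an algebra with a finite composition series by type~I subquotients is type~I. The AF property is the one assertion that is \emph{not} a formal consequence of the series, and this is the step I expect to demand the most care. I would establish it by realising $A$ concretely through the faithful representation $\rho_3$ on $\ell^2(\N^3)$: by the explicit formulae of \cite{bragiel}, the generators $w_{ijk}$ act as weighted shifts and diagonal operators whose matrix coefficients are polynomials in $q$ accumulating only at $0$, so that the spectral projections of compact operators such as $\rho_3(|w_{311}|^2)$ generate an increasing sequence of finite-dimensional subalgebras with dense union. Alternatively one invokes the known AF-ness of the generalised quantum flag manifolds of \cite{StoDij:fla,NevTus:hom}. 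Either way, the point is to exhibit the finite-dimensional approximants directly rather than to deduce AF-ness from (1).

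For assertion~(3) I would apply the six-term exact sequence along $I_1\subset I_2\subset I_3\subset A$. Since $K_1(\K)=K_1(\K\oplus\K)=K_1(\C)=0$, an induction shows $K_1(A)=0$ and collapses each stage to a short exact sequence $0\to K_0(I_{k-1})\to K_0(I_k)\to K_0(I_k/I_{k-1})\to 0$; every group appearing is free ($\Z$ or $\Z^2$), so these split and $K_0(A)\cong\Z\oplus\Z^2\oplus\Z^2\oplus\Z=\Z^6$. Assertion~(4) follows because, $A$ being type~I, $\pi\mapsto\ker\pi$ is a bijection $\hat A\to\prim(A)$, and the series shows $\prim(A)$ has exactly $1+2+2+1=6$ points, one per elementary summand; the six ideals $\ker(\rho_*)$ are pairwise distinct (distinctness of $\ker(\rho_{21}),\ker(\rho_{22})$ and of $\ker(\rho_{11}),\ker(\rho_{12})$ is forced by $I_2/I_1\neq0$ and $I_3/I_2\neq0$), so the $\rho_*$ exhaust $\hat A$. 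Assertion~(5) is then immediate: by construction each $\rho_*=\pi_*^{1,1}|_A$ is the restriction of the irreducible representation $\pi_*^{1,1}$ of $C(SU_q(3))$ acting on the same $\H_*$, and transporting along the unitary equivalence of~(4) extends an arbitrary irreducible representation of $A$ to one of $C(SU_q(3))$ on the same space.

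Finally, for assertion~(6) I would read the specialisation order off the inclusions among the $\ker(\rho_*)$. Lemma~\ref{allrho} gives $\ker(\rho_3)=0\subset\ker(\rho_{21}),\ker(\rho_{22})\subset I_2\subset\ker(\rho_{11}),\ker(\rho_{12})\subset\ker(\rho_0)$, all strict by non-vanishing of the successive factors, while $\ker(\rho_{21}),\ker(\rho_{22})$ are mutually incomparable (else $I_1=I_2$) and likewise $\ker(\rho_{11}),\ker(\rho_{12})$. Since for a finite $T_0$ space the topology is determined by the closures $\overline{\{P\}}=\{Q\in\prim(A):Q\supseteq P\}$, these containments yield precisely (6a)--(6d): $\ker(\rho_3)$ is dense as it lies in every primitive ideal, $\ker(\rho_0)$ is closed as it is maximal, and the closures of $\ker(\rho_{21}),\ker(\rho_{22})$ and of $\ker(\rho_{11}),\ker(\rho_{12})$ are as stated.
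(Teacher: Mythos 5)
The paper itself states this corollary without a separate proof, presenting it as a summary read off from Lemmas~\ref{rhoequivalent}--\ref{allrho}; your proposal is essentially that intended argument, reconstructed correctly. The series $0\subset J_3\subset J_{21}=J_{22}\subset J_{11}=J_{12}=\ker(\rho_0)\subset A$, the identification of the bottom factor via faithfulness of $\rho_3$ and Lemma~\ref{rhoirr}, the decomposition $(M+N)/(M\cap N)\cong M/(M\cap N)\oplus N/(M\cap N)$ with $\ker(\rho_{21})/I_1\cong\rho_{22}(I_2)=\K(\H_{22})$ (and its analogues), and the deduction of (3)--(6) from the series together with the inclusion lattice of the six kernels are all sound.

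Two corrections, one of substance. First, your one weak step is AF-ness, and your diagnosis of it is backwards: you assert that AF is ``not a formal consequence of the series,'' but it is a consequence, by L.~G.~Brown's theorem that an extension of an AF algebra by an AF algebra is again AF. Induction along $I_1\subset I_2\subset I_3\subset A$, whose subquotients $\K$, $\K\oplus\K$, $\K\oplus\K$, $\C$ are all AF, then gives AF-ness of $A$ at once. This matters because your proposed substitute is not a proof as it stands: exhibiting finite-rank spectral projections in the image of $\rho_3$ does not by itself produce an increasing sequence of finite-dimensional $*$-subalgebras of $A$ with dense union, and making that precise would require genuine additional work. Invoke Brown's theorem (or, as you also suggest, the literature on quantized flag manifolds) rather than the sketch. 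Second, a minor imprecision in (6): incomparability of $\ker(\rho_{21})$ and $\ker(\rho_{22})$ does not follow from ``else $I_1=I_2$''; a one-sided inclusion $\ker(\rho_{21})\subseteq\ker(\rho_{22})$ gives $I_1=\ker(\rho_{21})$ and $I_2=\ker(\rho_{22})$, not $I_1=I_2$. The correct point, already contained in your computation, is that each summand $\ker(\rho_{21})/I_1\cong\K(\H_{22})$ and $\ker(\rho_{22})/I_1\cong\K(\H_{21})$ is nonzero, which rules out either inclusion; the same remark applies to $\ker(\rho_{11})$ and $\ker(\rho_{12})$.
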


\section{Towards a noncommutative sphere bundle}

The classical  flag manifold $SU(3)/\T^2$ has the structure of a fibre bundle
with the base space $\C P^2$ and the fibre $\C P^1\cong S^2$. Therefore, it
is natural to expect that the quantum  flag manifold $SU_q(3)/\T^2$ should
have an analogous structure of a noncommutative `fibre bundle'
\begin{equation}\label{bundle}
\C P_q^1\lra SU_q(3)/\T^2 \lra \C P_q^2.
\end{equation}

It is not entirely clear how to reinterpret the ``bundle'' from (\ref{bundle}) in the noncommutative setting. However, as a minimum, we should have 
a projection (conditional expectation) from the algebra of ``functions on the total space'' $C(SU_q(3)/\T^2)$ onto the algebra of ``functions on the 
base space'' $C(\C P_q^2)$. So we begin by constructing such a conditional expectation.

The algebra $C(\C P_q^2)$ is a $C^*$-subalgebra of $C(SU_q(3)/\T^2)$ in a natural
way as follows (cf. \cite{vs}). The $C^*$-subalgebra of
$C(SU_q(3))$ generated by the first column matrix elements of $\mathbf{u}$, i.e.\ 
$u_{11}$, $u_{21}$ and $u_{31}$, may be identified with the
$C^*$-algebra $C(S^5_q)$ of continuous functions on the quantum
$5$-sphere. This $C^*$-subalgebra is invariant under the gauge
action $\mu$ of $\T^2$ on $C(SU_q(3))$. When restricted to
 $C(S^5_q)$, $\mu$ reduces to the generator-rescaling
circle action $u_{j1}\mapsto zu_{j1}$, $z\in\T$, whose fixed
point algebra is $C(\C P_q^2)$ (cf. \cite{vs,hs}). Thus,
in the setting of the present article, we have
$$
C(\C P_q^2)=C(SU_q(3)/\T^2)\cap C^*(u_{11},u_{21},u_{31}).
$$

In order to construct the desired conditional expectation 
$$
E: C(SU_q(3)/\T^2) \to C(\C P_q^2), 
$$
we will use integration over a quantum subgroup of $SU_q(3)$ isomorphic to the quantum unitary group $U_q(2)$. Indeed, 
recall from \cite{frt} or \cite{h} that $U_q(2)$ is a compact matrix quantum group with the $C^*$-algebra of continuous functions $C(U_q(2))$ generated densely by three elements $u,\alpha,\gamma$, organised  into a fundamental unitary matrix 
$$
\mathbf{v} =  \begin{pmatrix} u & 0 & 0 \cr
0 & \alpha & -q\gamma^*u^*\cr 
0 & \gamma & \alpha^*u^*
\end{pmatrix}.
$$
The generator $u$ is central, while
$$
\alpha\gamma = q \gamma\alpha,  \qquad \gamma\gamma^* = \gamma^*\gamma .
$$
The unitarity of $\mathbf{v}$ implies that $u$ is unitary, while $\alpha$ and $\gamma$ satisfy the remaining $SU_q(2)$ (cf. \cite{w1}) 
$q$-commutation rules
$$
\alpha\gamma^* = q \gamma^*\alpha, \qquad \alpha^*\alpha + \gamma\gamma^* = 1, \qquad \alpha\alpha^* + q^2 \gamma\gamma^*=1.
$$

As shown in \cite{BrzSz}, the $*$-homomorphism 
$$
\pi : \pol (SU_q(3)) \lra \pol (U_q(2)), \qquad \mathbf{u} \mapsto  \mathbf{v},
$$
is an epimorphism of compact quantum groups, and thus we  obtain a right coaction
\begin{equation}\label{coact.u2.su3}
\begin{aligned}
\varrho_{SU_q(3)} &: C(SU_q(3))\lra C(SU_q(3))\otimes C(U_q(2)), \\ \varrho_{SU_q(3)} &= (\id\otimes \pi)\circ \Delta_{SU_q(3)}. 
\end{aligned}
\end{equation}

One immediately checks that 
$$
\varrho_{SU_q(3)} \circ \mu_z = (\mu_z\otimes\id)\circ \varrho_{SU_q(3)}, 
$$
for all $z\in\T^2$, and this implies that the restriction of $\varrho_{SU_q(3)}$ to $C(SU_q(3)/\T^2)$ yields the  coaction
$$
\varrho_{SU_q(3)/\T^2} : C(SU_q(3)/\T^2)\lra C(SU_q(3)/\T^2)\otimes C(U_q(2)).
$$
Consequently,  
$$
(\id\otimes\h)\circ\varrho_{SU_q(3)/\T^2} : C(SU_q(3)/\T^2) \to C(SU_q(3)/\T^2)^{co\, U_q(2)}
$$
is a faithful conditional expectation. Here $\h$ denotes the Haar state on $C(U_q(2))$ and 
$$ 
C(SU_q(3)/\T^2)^{co\, U_q(2)} = \{ a\in  C(SU_q(3)/\T^2) \mid \varrho_{SU_q(3)/\T^2}(a) = a\otimes 1\} 
$$ 
is the $C^*$-subalgebra of coinvariants. It is shown in \cite{BrzSz} that 
$$
C(SU_q(3)/\T^2)^{co\, U_q(2)} = C(SU_q(3)) \cap C^*(u_{11}, u_{21}, u_{31}), 
$$
and thus 
\begin{equation}\label{expect}
E =   (\id\otimes\h)\circ \varrho_{SU_q(3)/\T^2} 
\end{equation} 
is the desired faithful conditional expectation from the algebra $C(SU_q(3)/\T^2)$ onto $C(\C P_q^2)$. 

In order to compute the conditional expectation value \eqref{expect} it is useful or, indeed necessary, to have an explicit description of the Haar state on $C(U_q(2))$. In fact, it is sufficient to have such a description on the dense subalgebra $\pol (U_q(2))$ of $C(U_q(2))$. One way of obtaining the Haar measure is first to realise that $\pol (U_q(2))$ is a right $\pol(SU_q(2))$-comodule algebra (i.e.\ the quantum group $SU_q(2)$ acts on $U_q(2)$) with fixed points equal to $\pol (U(1))$ and then to compose the Haar integrals on $\pol(SU_q(2))$ and $\pol(U(1))$ (both well-known, the first one  described by Woronowicz in \cite{w2}). 

The coaction $\varrho_{U_q(2)}$ of $\pol(SU_q(2))$ on $\pol (U_q(2))$ is induced from the Hopf-algebra projection
$$
\xymatrix{  \pol(U_q(2)) \ar@{->>}[r]^p  & \pol(SU_q(2))\, ,\\
 {\begin{pmatrix} u & 0 & 0 \cr
0 & \alpha & -q\gamma^*u^*\cr 
0 & \gamma & \alpha^*u^*
\end{pmatrix}}\ar@{|->}[r] & {\begin{pmatrix} 1 & 0 & 0 \cr
0 & \alpha & -q\gamma^*\cr 
0 & \gamma & \alpha^*
\end{pmatrix}}, }
$$
by 
\begin{equation}\label{su2-coact}
\varrho_{U_q(2)} = (\id\otimes p)\circ \Delta_{U_q(2)}: \pol(U_q(2))\lra \pol(U_q(2))\ot \pol(SU_q(2)).
\end{equation}
 As $*$-algebra the coinvariants of this coaction are generated by the unitary $u$, and hence are isomorphic to $\pol (U(1))$. The Haar functional $\h$ on $\pol(U_q(2))$ (and, consequently, on $C(U_q(2))$) is the composite
$$
\h: \xymatrix{\pol(U_q(2))\ar[r]^-{\varrho_{U_q(2)}} & \pol(U_q(2))\ot \pol(SU_q(2))\ar[rr]^-{\id \ot \h_{SU_q(2)}} &&  \pol (U(1))\ar[r]^-{\h_{U(1)}} & \C.}
$$
Here $\h_{SU_q(2)}$ is the Haar measure on the quantum group $SU_q(2)$ given on the standard basis of $\pol(U_q(2))$ as
\begin{equation}\label{haar.su2}
\h_{SU_q(2)}\left(\alpha^k\gamma^m\gamma^{*n}\right) = \delta_{k0}\,\delta_{mn}\, \frac{q^2 -1}{q^{2n+2} - 1}, \quad \mbox{for all $k\in \Z$, $m,n\in \N$},
\end{equation}
where we use the convention that, for $k<0$,  $x^k= (x^{*})^{-k}$; see \cite[Appendix~A1]{w2}. The Haar functional $\h_{U(1)}$ on the standard basis of $\pol (U(1))$ is given by
\begin{equation}\label{haar.u1}
\h_{U(1)} \left(u^k\right) = \delta_{k0}, \quad \mbox{for all $k\in \Z$}.
\end{equation}
Combining formulae \eqref{su2-coact}--\eqref{haar.u1} we thus obtain an explicit expression for the Haar functional on $\pol(U_q(2))$,
\begin{equation}\label{haar.u2}
\h\left(\alpha^ku^l\gamma^m\gamma^{*n}\right) = \delta_{k0}\,\delta_{l0}\,\delta_{mn}\, \frac{q^2 -1}{q^{2n+2} - 1}, \quad \mbox{for all $k,l\in \Z$, $m,n\in \N$}.
\end{equation}

With the explicit formula \eqref{haar.u2} at hand we can now compute the value of the conditional expectation \eqref{expect} on the elements  \eqref{gen.flag} of the quantum flag variety algebra $\pol(SU_q(3)/\T^2)$ densely included in the $C^*$-algebra of continuous functions $C(SU_q(3)/\T^2)$. In view of the fact that the coaction $\varrho_{SU_q(3)/\T^2}$ is the restriction of the map \eqref{coact.u2.su3} one easily finds that
\begin{multline*}
\varrho_{SU_q(3)} \left(w_{ijk}\right) = w_{ijk} \ot 1 + u_{i1}u_{j2}u_{k2} \ot uv_{22}v_{23} \\
 + u_{i1} \left(u_{j3}u_{k2} + q u_{j2}u_{k3}\right)\ot u v_{32} v_{23}
 + u_{i1}u_{j3}u_{k3} \ot u v_{32} v_{33}\\
  = w_{ijk} \ot 1 -q u_{i1}u_{j2}u_{k2} \ot \alpha \gamma^* \\
  - qu_{i1} \left(u_{j3}u_{k2} + q u_{j2}u_{k3}\right)\ot \gamma\gamma^*
 + u_{i1}u_{j3}u_{k3} \ot \gamma\alpha^*.
\end{multline*}
Now, the application of $\id\ot \h$ together with the commutation rules \eqref{qmatrix} yield,
$$
E(w_{ijk}) = \frac{w_{ijk} - w_{ikj}}{1+q^2}.
$$

\section{Conclusions}
In this short note we have studied representations and the structure of the algebra of continuous functions on the quantum flag manifold $SU_q(3)/\T^2$ obtained as the fixed points of the gauge action of the classical two-torus on the quantum $SU(3)$-group. We have also indicated that the  quantum flag manifold $SU_q(3)/\T^2$ can be interpreted as the total space of a quantum sphere bundle over the quantum projective space $\C P_q^2$, and we have presented an explicit formula for a faithful conditional expectation from $C(SU_q(3)/\T^2)$ onto $C(\C P_q^2)$. The detailed analysis of this bundle is presented in \cite{BrzSz}.


\begin{thebibliography}{88}

\bibitem{bragiel} K. Br\k{a}giel, 
{\em The twisted $SU(3)$ group. Irreducible $*$-representations of the $C^*$-algebra $C(S_\mu U(3))$}, 
Lett. Math. Phys. {\bf 17} (1989), 37--44.




\bibitem{BrzHaj:Che}
T.~Brzezi{\'n}ski and P.~M.~Hajac,
{\em The Chern-Galois character,}
C.\ R.\ Math.\ Acad.\ Sci.\ Paris  {\bf 338}  (2004),  113--116

\bibitem{BrzMaj:gau}
T.~Brzezi{\'n}ski and S.~Majid, 
{\em Quantum group gauge theory on quantum spaces,}
Commun.\ Math.\ Phys.\ {\bf 157}   (1993),  591--638. 
Erratum: {\bf 167}  (1995), 235. 

\bibitem{BrzSz} T. Brzezi{\'n}ski and W. Szyma{\'n}ski, 
{\em The quantum flag manifold $SU_q(3)/\T^2$ as an example of a noncommutative sphere bundle}, 
in preparation.

\bibitem{d} V. G. Drinfeld, 
{\em Quantum groups}. 
Proceedings of the International Congress of Mathematicians, Vol. 1, 2 (Berkeley, 1986), 798--820, Amer. Math. Soc., Providence, 1987.

\bibitem{h} P. M. Hajac, 
{\em Strong connections on quantum principal bundles},  
Commun. Math. Phys. {\bf 182}  (1996),  579--617. 

\bibitem{hs} J. H. Hong and W. Szyma\'{n}ski, 
{\em Quantum spheres and projective spaces as graph algebras}, 
Commun. Math. Phys. {\bf 232} (2002), 157--188.

\bibitem{ks} A. Klimyk and K. Schm\"{u}dgen, 
Quantum groups and their representations, Springer, Berlin, 1997.

\bibitem{NevTus:hom} S. Neshveyev and L. Tuset, {\em Quantized algebras of functions on homogeneous spaces with Poisson stabilizers}, Comm. Math. Phys. {\bf 312} (2012),  223--250.

\bibitem{frt} N. Reshetikhin, L. Takhtajan and L. Faddeev, 
{\em Quantization of Lie groups and Lie algebras}, 
Leningrad Math. J. {\bf 1} (1990), 193--226.

\bibitem{so} Y. S. Soibelman, 
{\em Irreducible representations of the algebra of functions on the quantum group $SU(n)$ and Schubert cells}, 
Dokl. Akad. Nauk SSSR {\bf 307} (1989), 41--45.

\bibitem{StoDij:fla} J. V. Stokman and M. S.  Dijkhuizen, {\em Quantized flag manifolds and irreducible $*$-representations}, Comm. Math. Phys. {\bf 203} (1999),  297--324. 

\bibitem{vs} L. L. Vaksman and Y. S. Soibelman, 
{\em Algebra of functions on quantum $SU(n+1)$ group and odd dimensional quantum spheres}, 
Algebra i Analiz {\bf 2} (1990), 101--120.

\bibitem{w1} S. L. Woronowicz, 
{\em Twisted $SU(2)$ group. An example of a non-commutative differential calculus}, 
Publ. Res. Inst. Math. Sci. {\bf 23} (1987), 117--181.

\bibitem{w2} S. L. Woronowicz, 
{\em Compact matrix pseudogroups}, 
Commun. Math. Phys. {\bf 111} (1987), 613--665.

\bibitem{w3} S. L. Woronowicz, 
{\em Tannaka-Krein duality for compact matrix pseudogroups. Twisted $SU(N)$ groups}, 
Invent. Math. {\bf 93} (1988), 35--76.

\end{thebibliography}
\end{document}